\newcommand{\norm}[1]{\left\lVert\,#1\,\right\rVert}
\newcommand{\vb}[1]{\mathbf{#1}}
\newcommand{\R}{\mathbbm{R}}
\newcommand{\T}{^T}
\renewcommand{\P}{\ensuremath{\operatorname{P}}}
\renewcommand{\Pr}[1]{\P \left \{ #1 \right \}}
\newcommand{\E}{\ensuremath{\operatorname{E}}}
\newcommand{\Ex}[2][]{\ensuremath{\E_{#1} \left[ #2 \right]}}
\newcommand{\var}[1]{\ensuremath{\operatorname{Var}\left( #1\right)}}
\newcommand{\pmat}[1]{\begin{pmatrix}#1\end{pmatrix}}
\newcommand{\bmat}[1]{\begin{bmatrix}#1\end{bmatrix}}
\newcommand{\bigo}[1]{\mathcal{O} \left( #1 \right)}
\newcommand{\minimize}{\text{minimize}}
\newcommand{\maximize}{\text{maximize}}
\newcommand{\st}{\text{subject to}}
\newenvironment{proof}{\textbf{Proof:}}{$\Box$}
\newtheorem{definition}{Definition}
\newtheorem{lemma}{Lemma}
\newtheorem{proposition}{Proposition}
\newtheorem{theorem}{Theorem}
\title{Conditions for Correct Sensor Network Localization \\ Using SDP Relaxation}
\author{Davood Shamsi%
  \thanks{Department of Management Science and Engineering,
          Stanford University, Stanford, CA 94305.
          Email: {\tt davood@stanford.edu}, , {\tt yinyu-ye@stanford.edu}}
  \and Nicole Taheri\thanks{Institute for Computational and Mathematical
       Engineering, Stanford University, Stanford, CA 94305.
       Email: {\tt ntaheri@stanford.edu}}
    \and Zhisu Zhu\thanks{Oracle Corporation, 500 Oracle Parkway, Redwood Shores, California 94065.
       Email: {\tt zhuzhisu@gmail.com}}
  \and Yinyu Ye\footnotemark[1] }
\begin{document}

\maketitle

\begin{abstract}
	
	A Semidefinite Programming (SDP) relaxation is an effective
	computational method to solve a Sensor Network Localization
	problem, which attempts to determine the locations of a group of
	sensors given the distances between some of them.  In this paper, we
	analyze and determine new sufficient conditions and formulations
	that guarantee that the SDP relaxation is exact, i.e., gives the
	correct solution.  These conditions can be useful for designing
	sensor networks and managing connectivities in practice.
	
	Our main contribution is threefold: First, we present the first
	non-asymptotic bound on the connectivity (or radio) range
	requirement of randomly distributed sensors in order to ensure the
	network is uniquely localizable with high probability.  Determining
	this range is a key component in the design of sensor networks, and
	we provide a result that leads to a correct localization of each
	sensor, for any number of sensors.  Second, we introduce a new class
	of graphs that can always be correctly localized by an SDP
	relaxation.  Specifically, we show that adding a simple objective
	function to the SDP relaxation model will ensure that the solution
	is correct when applied to a triangulation graph.  Since
	triangulation graphs are very sparse, this is informationally
	efficient, requiring an almost minimal amount of distance
	information.  Finally, we analyze a number of objective functions for
	the SDP relaxation to solve the localization problem for a general
	graph.

\end{abstract}

\section{Introduction}

\label{section::intro}
Graph Realization is a commonly studied topic which attempts to map
the nodes in a graph $G(V,E)$ to point locations in Euclidean
space based on the non-negative weights of the edges in $E$; that is,
the weight of each edge corresponds to the Euclidean distance between
the incident points.  There are a number of applications of the graph
realization problem \cite{belk07, bulu00, CRIP88, LI08, BGJ09}. In
this paper, we focus on the application to Sensor Network Localization
(SNL).

A sensor network consists of a collection of \emph{sensors} whose
locations are unknown, and \emph{anchors} whose locations are known.
A common property of a sensor network is that each sensor detects
others within a given connectivity (or radio) range and determines the
distance from itself to these nearby sensors.  Given this set of known
distances, the goal is to determine the exact location of each
sensor.  The problem becomes a graph realization problem by forming
the weighted undirected graph $G(V,E)$, where the node set $V$
represents the sensors and each non-negative weighted edge in $E$
represents a known distance between two sensors.

The SNL problem has received a lot of attention recently because of
the formulation of its relaxation as a Semidefinite Program (SDP)
\cite{biswas, manchoso, alfakih97,KW10}.  This formulation can find
the exact locations of the sensors, given that the graph possesses
certain properties.

\begin{definition}
	A \emph{correct localization}, or a \emph{correct solution},
	provides a set of points that is exactly equal to the sensor
	locations.  That is, the solution not only solves a given
	formulation, but it provides the correct sensor locations in the
	desired dimension.
\end{definition}

In this paper, we present a number of additional sufficient conditions
that guarantee unique localizability (and hence a correct
localization) of the SDP relaxation of the SNL problem.  These
conditions can be useful for designing sensor networks and managing
connectivities in practice.

\subsection{Background}
\label{section::background}

We are given a graph $G(V,E\cup \bar{E})$ in a fixed dimension $d$,
where the nodes, or points, of $V$ are partitioned into two sets: the
set $V_a=\{a_1, \ldots, a_m\}$ of $m$ anchors (where $m \geq d + 1$)
whose locations are known and the set $V_x=\{x_1, \ldots, x_n\}$ of
$n$ sensors, whose locations are unknown. The edge set also consists
of two distinct sets: the set $E = \{(i,j): i,j \in V_x\}$ of edges
between sensors, and the set $\bar{E}=\{(k,j): k \in V_a, j \in V_x\}$
of edges between an anchor and a sensor. Moreover, for each $(i,j)\in
E$ (or $(k,j)\in \bar{E}$) the Euclidean distance between sensor $i$ and
sensor $j$ (respectively, anchor $k$ and sensor $j$) is known as $d_{ij}$
(respectively $\bar{d}_{kj}$). The problem of finding the locations of the
sensors can be formulated as finding points $x_1, x_2, \ldots, x_n \in
\R^d$ that satisfy a set of quadratic equations:
\begin{align}
	 \norm{x_i - x_j}^2 = d_{ij}^2,& \ \forall \; (i,j) \in E \notag \\
	 \norm{a_k - x_j}^2 = \bar{d}_{kj}^2,& \ \forall \; (k,j) \in
	 \bar{E}.
	 \tag{SNL-norm}
   \label{eq::orig-form}
\end{align}
From this, a number of fundamental questions naturally arise:  Is
there a localization or realization of $x_j$'s that solves this
system?  If there is a solution, is it unique? And is there a way to
certify that a solution is unique?  Is the network instance partially
localizable, i.e., is the localization solution for a subset of the
sensors unique?  These questions were extensively studied in the graph
rigidity and discrete geometry communities from a more combinatorial
and theoretical prospective (see \cite{hendrickson, connelly} and
references therein). However, the question of whether there is an
efficient algorithm to numerically answer some of these questions
remains open.

The SDP relaxation model \eqref{eq::sdp-form} and corresponding method
aim to answer these questions computationally (see \cite{biswas,
manchoso}).  Let $e_i\in \R^n$ represent the $i$th column of the
identity matrix in $\R^{n \times n}$, and define the symmetric
matrices $A_{ij} := ( \vb 0; e_i - e_j)(\vb 0; e_i - e_j)\T$ and
$\bar{A}_{kj} := (a_k; - e_j) (a_k; - e_j)\T$, where $\vb 0\in\R^d$ is
the vector of all zeros. The SDP relaxation can be represented as:
\begin{equation}
   \begin{array}{lll}
       \maximize & 0 \\
       \st & Z_{(1:d,1:d)} = I_d \\
        & A_{ij} \bullet Z = d_{ij}^2, & \forall (i,j) \in E \\
				&\bar{A}_{kj} \bullet Z = \bar{d}_{kj}^2, & \forall (k,j) \in
					\bar{E} \\
       & Z \succeq 0.
   \end{array}
	 \tag{SNL-SDP}
   \label{eq::sdp-form}
\end{equation}
Here, $Z_{(1:d,1:d)}$ represents the upper-left $d$-dimensional
principle submatrix of $Z$, the matrix dot-product refers to the sum
of element-wise products $A\bullet B=\sum_{ij}A_{ij}B_{ij}$, and $Z
\succeq 0$ means that the symmetric variable matrix $Z$ is positive
semidefinite. Note that problem \eqref{eq::sdp-form} is a convex
semidefinite program and can be approximately solved in polynomial
time by interior-point algorithms.

One can see that the solution matrix $Z \in \R^{(d+n)\times (d+n)}$ of
\eqref{eq::sdp-form} is a matrix that can be decomposed into submatrices,
\[
   Z = \bmat{ I & X \\ X\T & Y }.
\]
The constraint $Z \succeq 0$ holds if and only if $Y \succeq X\T X$. If $Y =
X\T X$, then the above formulation finds a matrix $Z$ such that the
columns of its submatrix ${X = \bmat{x_1 & x_2 & \cdots & x_n}}$
satisfy all quadratic equations in \eqref{eq::orig-form}.

\begin{definition}
A sensor network is \emph{uniquely localizable} if there is a unique
$X \in \R^{d \times n}$ whose columns satisfy \eqref{eq::orig-form},
and there is no $\bar{X} \in \R^{h \times n}$, for $h > d$, whose
columns satisfy $\eqref{eq::orig-form}$ and $\bar{X} \neq (X; \vb 0)$.
In other words, there is no nontrivial extension of $X \in \R^{d
\times n}$ into higher dimension $h > d$ that also satisfies
\eqref{eq::orig-form} \cite{manchoso}.
\end{definition}

Note that the notion of unique localizability is stronger than the
notion of global rigidity.  A sensor network is globally rigid only if
there is a unique $X \in \R^{d \times n}$ that satisfies
\eqref{eq::orig-form}, but it may also have a solution in a higher
dimension space, that is a nontrivial extension of $X \in \R^{d
\times n}$, which satisfies \eqref{eq::orig-form} \cite{manchoso,
AL07}.

The following theorem was proved in \cite{manchoso}:
\begin{theorem}
An SNL problem instance is uniquely localizable if and only if the
maximum rank solution of its SDP relaxation \eqref{eq::sdp-form} has rank
$d$, or equivalently, every solution matrix $Z$ of \eqref{eq::sdp-form}
satisfies $Y = X\T X$. Moreover, such a max-rank solution matrix can
be computed approximately in polynomial time.
\end{theorem}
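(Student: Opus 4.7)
The plan is to establish the two equivalent characterizations of unique localizability by exploiting a single algebraic fact: for any feasible $Z$ of \eqref{sdp-form}, writing $Z = \bmat{I & X \\ X\T & Y}$ and applying the Schur complement yields $Y \succeq X\T X$ with $\rank(Z) = d + \rank(Y - X\T X)$. In particular, $\rank(Z) = d$ is equivalent to $Y = X\T X$. I would then use convexity of the feasible set to argue that the max-rank solution attains rank $d$ if and only if every feasible $Z$ satisfies $Y = X\T X$: if some solution had $Y \neq X\T X$, its rank would strictly exceed $d$, so the max rank could not be $d$. This reduces the theorem to proving the equivalence between the property ``every solution has $Y = X\T X$'' and unique localizability.

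For the direction ``unique localizability $\Rightarrow$ every solution has $Y = X\T X$,'' I would argue by contrapositive. Given any feasible $Z$ with $Y \neq X\T X$, I would factor $Y - X\T X = U\T U$ with $U \in \R^{r \times n}$, $r \geq 1$, and form $\bar X = \bmat{X \\ U} \in \R^{(d+r) \times n}$. The key step is to verify directly that $\bar X$ satisfies \eqref{orig-form} when each anchor $a_k$ is extended to $(a_k; \vb 0)$: for a sensor-sensor edge, $\|\bar x_i - \bar x_j\|^2 = Y_{ii} + Y_{jj} - 2 Y_{ij} = A_{ij}\bullet Z = d_{ij}^2$, and a similar expansion of $\bar A_{kj}\bullet Z$ handles the sensor-anchor case. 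Since $U \neq 0$, the matrix $\bar X$ is not of the form $(X^*; \vb 0)$ for any $X^* \in \R^{d \times n}$, so it certifies that the network is not uniquely localizable.

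For the reverse direction, I would assume every feasible $Z$ satisfies $Y = X\T X$ and rule out the two possible failures of unique localizability. If two distinct realizations $X_1 \neq X_2 \in \R^{d \times n}$ of \eqref{orig-form} existed, then $Z_i = \bmat{I \\ X_i\T}\bmat{I & X_i}$ would both be feasible, and by convexity so would $Z = \half(Z_1 + Z_2)$; imposing $Y = X\T X$ on this average and simplifying should collapse to $(X_1 - X_2)\T(X_1 - X_2) = 0$, forcing $X_1 = X_2$. Similarly, if a non-trivial higher-dimensional extension $\bar X = (X'; U)$ with $U \neq 0$ existed, I would embed it as $Z' = \bmat{I & X' \\ X'\T & X'\T X' + U\T U}$, which is SDP-feasible by the same distance calculation as in the forward direction yet has $Y' \neq X'\T X'$, a contradiction.

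The ``moreover'' clause about polynomial-time computation I would handle by invoking the standard fact that primal-dual interior-point methods for SDP generate iterates converging to the analytic center of the optimal face, which is a solution of maximum rank. The main obstacle will be the bookkeeping in the direct verification that the constructed $\bar X$ and $Z'$ genuinely satisfy all the SNL constraints with anchors extended correctly, and that the convex combination in the reverse direction really collapses algebraically to $X_1 = X_2$; once those computations are nailed down, the rest of the argument assembles cleanly from the Schur-complement identity and convexity of the SDP feasible set.
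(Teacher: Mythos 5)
The paper does not prove this theorem at all --- it states it as a known result and cites So and Ye \cite{manchoso} --- so there is no in-paper argument to compare against; your proposal correctly reconstructs the standard proof from that reference (Schur complement to identify $\rank(Z)=d$ with $Y=X\T X$, the lift $\bar X=(X;U)$ with $U\T U=Y-X\T X$ for one direction, convexity plus $(X_1-X_2)\T(X_1-X_2)=0$ and the embedding of a higher-dimensional realization for the other, and convergence of interior-point iterates to the max-rank analytic center of the optimal face for the ``moreover'' clause). The only caveat worth recording is the implicit assumption that the SDP is feasible, i.e., that some realization exists; otherwise your argument is sound and complete once the routine verifications you flag are written out.
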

The theorem asserts that the certification of a uniquely localizable
network instance can be achieved by solving a convex optimization
problem; the proof is constructive and produces a unique realization
or localization solution for the original problem
\eqref{eq::orig-form}.

The dual of the SDP relaxation \eqref{eq::sdp-form}
\begin{align}
       \minimize \; \; & I_d \bullet V + \sum_{(i,j) \in E} y_{ij} d_{ij}^2
           + \sum_{(k,j) \in \bar{E}} w_{kj} \bar{d}_{kj}^2 \notag \\
       \st \; \; & \pmat{ V & 0 \\ 0 & 0 }
           + \sum_{(i,j) \in E} y_{ij} A_{ij}
       + \sum_{(k,j) \in \bar{E}} w_{kj} \bar{A}_{kj} \succeq 0
			 \tag{SDP-dual}
			 \label{eq::sdp-dual}
\end{align}
is also useful, in that the solution to the dual tells us key
properties about the primal. We define the dual slack matrix $U \in
\R^{(d+n)\times(d+n)}$ as
\begin{align*}
U = \pmat{ V & 0 \\ 0 & 0 } + \sum_{(i,j) \in E} y_{ij} A_{ij} +
 \sum_{(k,j) \in \bar{E}} w_{kj} \bar{A}_{kj}.
\end{align*}
for $V\in \R^{d \times d}$.  The dual slack matrix $U$ is optimal if
and only if it is feasible and meets the complementarity condition,
$ZU = 0$.  If complementarity holds, then rank$(Z) + $ rank$(U) \leq
(d+n)$, and since rank$(Z) \geq d$, this means that rank$(U) \leq n$.
Thus, if an optimal dual slack matrix has rank $n$, then every
solution to \eqref{eq::sdp-form} has rank $d$ \cite{manchoso}. In fact, we
have a stronger notion on localizability:
\begin{definition}
A sensor network is \emph{strongly localizable} if there exists an
optimal dual slack matrix with rank $n$.
\end{definition}
Again, such a max-rank dual solution matrix can be computed
approximately in polynomial time using SDP interior-point algorithms.

\subsection{Our Contributions}

In this paper, we present new conditions that guarantee unique
localizability of the SDP relaxation of the problem, i.e., conditions
that ensure the SDP will give the correct solution so that the sensor
network can be localized in polynomial time. We also enhance the
relaxation such that the new SDP relaxation will produce a correct
solution in dimension $d$ to satisfy \eqref{eq::orig-form}, even when
the standard SDP relaxation \eqref{eq::sdp-form} may not.  More precisely,
our result is twofold:

\begin{enumerate}
	 \item A very popular graph in the context of sensor network
		localization is the unit-disk graph, where any two sensor points
		(or a sensor point and an anchor point) are connected if and only
		if their Euclidean distance is less than a given connectivity radius
		$r$.  It has been observed that when the radius (or radio
		range) increases, more sensors in the network can be correctly
		localized.  There is an asymptotic analysis to explain this
		phenomenon when the sensor points are uniformly distributed in a
		unit-square \cite{angluin-stably}.  In this paper, we present a
		\emph{non-asymptotic} bound on the radius requirement of the
		points in order to ensure the network is uniquely localizable with
		high probability.  Specifically, we decompose the area into
		sub-regions, which allows us to analyze whether the locations of
		points in each sub-region can be determined, as opposed to
		analyzing each point individually.  We then determine the
		probability that the locations of all sensors can be
		determined, given a specified concentration of the sensors in a
		given area. This may have practical impact by providing
		guidance on communication power ranges that ensure the network is
		uniquely localizable.

	 \item The basic SDP localization model \eqref{eq::sdp-form} is an SDP
		feasibility problem.  An open question has been to determine
		whether adding a certain objective function to the basic model
		improves localizability of the problem; that is, if the SDP
		feasible region contains high-rank solutions, is the SDP optimal
		solution guaranteed to be unique and low-rank with a certain
		objective?  We give an affirmative answer for a generic class of
		graphs, by identifying an objective function that will always
		result in a correct localization for this class of graphs.  Our
		result may also have an influence on Compressed Sensing, which
		uses an objective function to produce the sparsest solution.
		Based on this idea, we present numerical results by comparing
		several SDP objective functions to illustrate their effectiveness.
\end{enumerate}

Moreover, although our theoretical analyses are based on exact
distance measurements, similar extensions of our model (established in
earlier SDP work) would be applicable to noisy distance data.

\subsection{Paper Organization}
\label{section::organization}

The organization of this paper is as follows. First, Section
\ref{section::bound} derives a lower bound for the connectivity radius
in a sensor network that guarantees unique localizability with high
probability.  In Section \ref{section::triangulation}, we prove that
given a triangulation (i.e., a planar, chordal and convex) graph, if
the sum of the distances between nodes that do \emph{not} have an edge
between them is maximized, then the graph will be strongly
localizable.  We use this idea, and test a number of heuristic
objective functions on a large number of random sensor networks to
determine how well each works in practice.  Our results for these
heuristics are presented in Section \ref{section::obj-fn}.

\section{Bounding the Connectivity Radius}
\label{section::bound}

In this section, we consider the unit-disk graph model
\cite{aspnes-complex, BDHI06, CCJ91} for sensor networks, where the
Euclidean distance between any two sensor points (or a sensor point
and an anchor point) is known (i.e., the two points are connected) if
and only if the distance between them is less than a given
connectivity radius $r$.  Assuming that the sensor points are randomly
distributed in a region,
we then establish a lower bound on radius $r$ that guarantees unique
localizability, with high probability, of the sensor network formed
based on radius $r$.  We do this by establishing a lower bound on
radius $r$ to ensure that the unit-disk graph is a $(d+1)$-lateration
graph, which is a sufficient condition for unique localizability.

\begin{definition}
	 For some $d, n \geq 1$, the graph $G(V,E)$ is a
	 \emph{($d$+1)-lateration graph} if there exists a permutation of
	 the points, $\{\pi(1), \pi(2), \ldots, \pi(n)\}$, such that the
	 edges of the sub-graph ${\pi(1), \ldots, \pi(d+1)}$  form a
	 complete graph, and each successive point $\pi(j)$ for $j \geq d+2$
	 is connected to $d+1$ points in the set $\{\pi(1), \ldots,
	 \pi(j-1)\}$.  This permutation of the points, $\pi$, is called a
	 \emph{$(d+1)$-lateration ordering}.
   \label{def::lateration}
\end{definition}

It is shown in \cite{zhu} that if a sensor network graph contains a
spanning $(d+1)$-lateration graph and the points are in general
position, then it is uniquely localizable.  Zhu et al.\ \cite{zhu}
provide a rigorous proof, which is based on the intuitive concept that
given $d+1$ points in general position forming a complete graph, the
locations of the points can be always be uniquely determined, and the
location of any point connected to $d+1$ points with known locations
can also be determined.

Define $r(p)$ to be the smallest connectivity radius of the randomly
distributed sensor points that ensures the network is uniquely
localizable with probability at least $p$.  To find a lower bound on
$r(p)$, we can find a connectivity radius for which the unit-disk
graph $G(V,E)$ will contain a spanning $(d+1)$-lateration graph with
at least probability $p$.

We approach the problem by considering a unit hypercube $\mathcal{H}
=[0,1]^d$, which contains all the sensor points.  We then split the
region $\mathcal{H}$ into a grid of $M$ equal sub-hypercubes in
dimension $d$, say $h_1, h_2, \ldots, h_M \subset \mathcal{H}$, where
each sub-hypercube $h_i$ will have a volume of $1/M$, and the length
of each of its edges will be $\ell := 1/\sqrt[d]{M}$.  Without loss of
generality, we can assume $M = b^d$, where $b$ is a positive integer
and $b \geq 3$. Similarly, if the region considered is a
hyper-rectangle in dimension $d$, we can assume $M = b_1 \cdot b_2
\cdots b_d$, where $b_i \geq 3$ for $i = 1, \ldots, d$ are positive
integers.  This partition will allow us to analyze the probability
that the locations of sensors in a given region can be determined, as
opposed to analyzing each individual point.

\subsection{Ensuring a Clique in the Graph}
\label{subsection::clique}

Since a $(d+1)$-lateration ordering on the points must begin with a
$(d+1)$-clique, we first find a lower bound on the radius $r$ to
ensure there exists at least one clique of $d+1$ points in the graph.

\begin{proposition}
	Let $\mathcal{H}$ contain $n$ points, and $r \geq
	\ell\sqrt{d}=\frac{ \sqrt{d}}{\sqrt[d]{M}}$ and $M \leq
	\frac{n-1}{d}$ (or equivalently $r \geq \frac{ \sqrt[d]{d}
	\sqrt{d}}{\sqrt[d]{n-1}}$). Then, there exists at least one clique
	of $d+1$ points in the unit-disk graph $G(V,E)$.
	\label{prop::lower-bd}
\end{proposition}

\begin{proof}{
		Note that $\frac{ \sqrt{d}}{\sqrt[d]{M}}$ is the length of the
		diagonal of each sub-hypercube $h_i$. Thus, if $r$ is
		lower-bounded by the given value, then every point in a
		sub-hypercube will be connected to any other point in the same
		sub-hypercube.  Furthermore, since there are at most
		$\frac{n-1}{d}$ sub-hypercubes, by the pigeon-hole principle, at
		least one of them contains at least $(d+1)$ points and they must
		form a clique of $d+1$ points in the unit-disk graph with given
		radius $r$.  } \end{proof}
\vskip 0.1in

In what follows, we fix $n=d\cdot M+1=d\cdot b^d+1$. We will
initialize the spanning $(d+1)$-lateration graph construction by
choosing $r$ according to this lower bound, and let the points in the
$(d+1)$-clique be the first $d+1$ points in the lateration ordering.
Since these points are randomly distributed, they must be in general
position with probability one. Thus, we may assume that these $d+1$
points are anchors for the sensor network. This assumption is without
loss of generality, because our bound on the radius $r$ established in
the following sections will be much greater than the bound specified
in Proposition \ref{prop::lower-bd}, simply because we need to ensure that not
only does there exist a clique of $d+1$ points, but also all sensor
points in $\mathcal{H}$ form a spanning $(d+1)$-lateration graph with
a high probability.

\subsection{Binomial Distribution Model}
\label{section::binomial}

One way to let the sensor points be randomly distributed throughout
the area of $\mathcal{H}$ is to let the points be binomially
distributed throughout
each sub-hypercube of $\mathcal{H}$.
More specifically, the number of points, $Y_i$, placed in each
sub-hypercube $h_i$, for $i=1,...,M$, will be independently and
binomially generated according to ${Y_i \sim B\left( n,\frac{1}{M}
\right)}$ with $n=d\cdot M+1=d\cdot b^d+1$.  Once $Y_i$ is generated,
we let these $Y_i$ sensor points be arbitrarily placed in general
position within sub-hypercube $h_i$.

Using this binomial distribution model, let $S_n = \sum_{i=1}^{M} Y_i$
denote the total number of points in the hypercube $\mathcal{H}$.
Since the $Y_i$ values  are independently and identically distributed and
all sub-hypercubes are equally sized, the total number of points will
be more or less evenly distributed in the entire hypercube
$\mathcal{H}$. Furthermore, by properties of the binomial
distribution,
\begin{align*}
 \Ex{S_n} &= M \cdot \Ex{ Y_1 } =
       M \left( \frac{n}{M}\right ) = n \label{eq::exp-sn} \\
 \var{S_n} &= M\cdot \var{Y_1} =  M
       \cdot \left[\frac{n}{M} \left(1-\frac{1}{M}\right)\right]
     = n\left(1-\frac{1}{M}\right).
\end{align*}
Thus, $\frac{S_n}{n}\to 1$ almost surely and the assumption of
binomially distributed sensor points throughout each sub-hypercube is
statistically equivalent to assuming a uniform distribution of $n$
points throughout the whole region $\mathcal{H}$ when $M$ is
sufficiently large.

\subsection{Connectivity Bound}
\label{subsection::bound}

We now form further conditions on the connectivity radius $r$ to
ensure that the unit-disk graph $G$ contains a spanning
$(d+1)$-lateration graph.  We have assumed that the points are
binomially distributed in each sub-hypercube, parametrized as
${B\left(n,\frac{1}{M} \right)}$.  First, $r$ must satisfy Proposition
\ref{prop::lower-bd}, since it ensures a ($d+1$)-clique in $G$.  These points in
the clique will represent the first $d+1$ points in the lateration
ordering $\pi$ of a spanning $(d+1)$-lateration graph (Definition
\ref{def::lateration}).

We construct an improved bound on the probability of localizability
through an ordering of the hypercubes, $h_i \in \mathcal{H}$, and
hence an ordering on the points.  For simplicity, we prove the
following lemmas for the case of $d=2$, and we refer to the
sub-hypercubes as sub-squares.  We also refer to $(d+1)$-lateration
when $d=2$ as \emph{trilateration}. However, we note that the same
analysis can be applied to hypercubes in higher dimensions, and our
bound $r \geq 2 \ell \sqrt{2}$ in Lemmas
\ref{lemma::sequential}--\ref{lemma::empty} is analogous to the bound $r \geq 2
\ell \sqrt{d}$ in dimension $d$.

\begin{lemma}
	Assume that each sub-square in $\mathcal{H} \in \R^2$ has at least
	one point, and $r \geq 2 \ell \sqrt{2}$.  If the points of three
	sub-squares in the same row in three consecutive columns are in the
	trilateration ordering, then the points in all sub-squares in those
	three columns are also in the ordering.  Similarly, if the points in
	three consecutive sub-squares in the same column are in the
	trilateration ordering, then the points in all sub-squares in those
	three rows are also in the ordering.
 \label{lemma::sequential}
\end{lemma}
\begin{proof}{
				First, note that the lower bound $r \ge 2 \ell \sqrt{2}$
				ensures that all points in a given sub-square are connected to
				all points in a neighboring sub-square, which share either an
				edge or a point within the given sub-square.

				For ease of explanation, let $(i,j)$ represent the sub-square
				in the $i$-th row and $j$-th column, and consider the case
				that all points in the first three sub-squares in the first
				row of the grid are already in the trilateration. Since all
				points in sub-square $(2,2)$  are within the connectivity
				range of all three points in the first row, these points are
				in the trilateration. Then, all points in sub-square $(2,1)$
				(or $(2,3)$) are within the connectivity range of at least
				three points in sub-square $(1,2), (2,2), (1,1)$ (or $(1,2),
				(2,2), (1,3)$), these points are also in the trilateration.
				Therefore, all points in the first three sub-squares of the
				second row are in the trilateration.

				Similarly, all points in the third row
				of the grid in the first three columns are also in the trilateration.  This pattern continues,
				until all points in the first three columns of the grid are in
				the trilateration.

				A generalization of this shows that if there are three
				sub-squares in the same row and in consecutive columns with
				points in the trilateration, and each sub-square has at least
				one point, then all points in the corresponding columns are
				also in the trilateration.

				An analogous result holds for three sub-squares in the same
				column and in consecutive rows.}
\end{proof}

Lemma \ref{lemma::sequential} states that if there are three consecutive
sub-squares in a row with points in the trilateration, then the
trilateration ordering extends to all squares in the corresponding
columns. This concept is used below in Lemma \ref{lemma::nonempty}, which
analyzes the cases depicted in Figure \ref{fig::lemma1}.

\begin{figure}[!htbp]
\centering
   \subfloat{\includegraphics[scale=1.2]{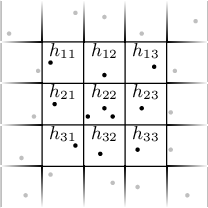}}
    \qquad
   \subfloat{\includegraphics[scale=1.2]{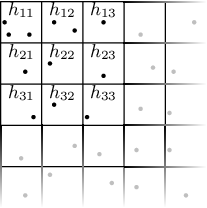}}
 \caption{Conditions as described in Lemma \ref{lemma::nonempty} to ensure
 trilateration}
   \label{fig::lemma1}
\end{figure}

\begin{lemma}{
	Assume there is at least one point in each sub-square and $r \geq
	2\ell\sqrt{2}$.  Then the associated unit-disk graph contains a
	spanning trilateration graph if either:
  \begin{enumerate}[a)]
		\item There is a $3$-clique in a non-corner sub-square
			\label{lemma::cond1}
		\item There is a $3$-clique in a corner sub-square and one of its
			neighbor squares has at least two points\label{lemma::cond2}
	\end{enumerate}
\label{lemma::nonempty} }
\end{lemma}

\begin{proof}{
 Again, note that $r \geq 2\ell\sqrt{2}$ ensures all points in a given
 sub-square are connected to all points in neighboring sub-squares.
 We show that if either of the conditions of Lemma \ref{lemma::nonempty} are
 satisfied, then there exists a trilateration ordering on the points
 in the graph.

\begin{enumerate}[a)]
	\item Consider the example in the left grid of Figure
		\ref{fig::lemma1}, where there is a $3$-clique in the non-corner
		sub-square $h_{22}$.  Let the points in this clique be the initial
		$3$ points in the trilateration ordering.  All points in the
		sub-squares $\{h_{11},h_{12},h_{13},h_{21},h_{23},h_{31},h_{32},
		h_{33}\}$ are connected to this clique; let the points in these
		squares be next in the trilateration ordering.

		By Lemma \ref{lemma::sequential}, all points in the sub-squares in
		rows 1-3 are in the trilateration ordering.  Since there are at
		least three columns in $\mathcal{H}$, the same argument applies
		for the columns, and inductively, there is a trilateration
		ordering on the points that spreads throughout the entire
		hyperspace $\mathcal{H}$.

	\item Now consider the right grid of Figure \ref{fig::lemma1}, where
		there is a $3$-clique in the corner sub-square $h_{11}$, and there
		are at least two points in a neighboring sub-square.  Let the
		points in $h_{11}$ be the first $3$ points in the trilateration
		ordering. All points in the three sub-squares $h_{12}, h_{21},
		h_{22}$ are connected to the points in the clique and hence in the
		trilateration ordering.  Next, let the points in sub-squares
		$h_{31}, h_{32}, h_{33}, h_{23}, h_{13}$ be the succeeding points
		in the ordering.  With a similar argument as before using Lemma
		\ref{lemma::sequential}, we can construct a trilateration on the
		points in the graph, and all points are in the trilateration.
 \end{enumerate}
	Therefore, if the conditions of Lemma \ref{lemma::nonempty} hold, the
	associated unit-disk graph contains a spanning trilateration graph.
} \end{proof} \\

The above lemma provides sufficient, but not necessary, conditions on
a network for trilateration to exist, which implies unique
localizability.  Moreover, these are strict conditions for a sensor
network, since the distribution of sensors in a network may not always
ensure that there is one sensor in each sub-square.  Thus, we extend
these conditions to a more general case, and allow for the possibility
of empty sub-squares.  Clearly, too many empty sub-squares will result
in a graph that is not uniquely localizable; also, if empty
sub-squares exist, there must be restricting conditions to ensure the
graph is not too sparse to ensure localizability.  Thus, we establish
additional properties of the graph that ensure a trilateration but
allow for empty sub-squares.

\begin{definition}
	Two neighboring sub-squares are called \emph{adjacent} neighbors if
	they do not share any edges, but share a point; neighbors that share
	an edge are called \emph{simple} neighbors. A sub-square is called
	\emph{densely surrounded} if all its simple neighbors have at least
	two points and one of its simple neighbors has at least 3 points.
\end{definition}

\begin{figure}[!htbp]
\centering
	 \subfloat[Example of grid that does not satisfy conditions of Lemma
	 \ref{lemma::empty}]{\includegraphics[scale=.7]{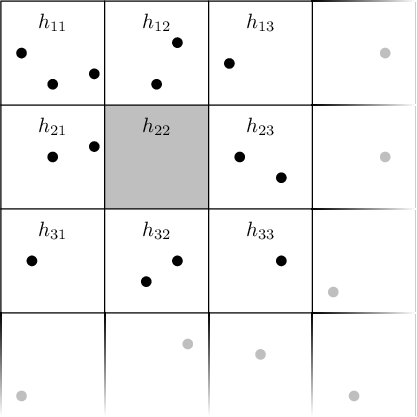}}
    \qquad
	 \subfloat[Example of grid that satisfies conditions of Lemma
	 	\ref{lemma::empty}]{\includegraphics[scale=.7]{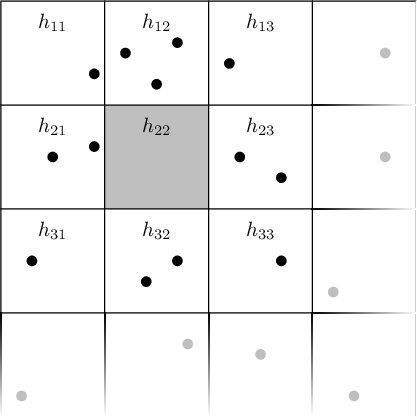}}
	 \caption{Example grids for Lemma \ref{lemma::empty}}
   \label{fig::lemma-empty}
\end{figure}

\begin{lemma}{
	Assume every empty sub-square is densely surrounded and $r \geq
	2\ell\sqrt{2}$.  Then the associated unit-disk graph contains a
	spanning trilateration graph if there is a $3$-clique in a
	non-corner sub-square.
\label{lemma::empty}}
\end{lemma}

\begin{proof}{
	Consider the grids in Figure \ref{fig::lemma-empty}, which shows an example
	and counter example of the conditions in Lemma \ref{lemma::empty}.

	\begin{enumerate}[a)]
	\item First, consider the left grid of Figure \ref{fig::lemma-empty},
		which does not satisfy the condition of Lemma \ref{lemma::empty}
		because there is a clique in a corner sub-square.  Notice that if
		a trilateration ordering starts with the points in sub-square
		$h_{11}$, it can continue to the points in sub-squares $h_{12}$
		and $h_{21}$, but will not spread to points in other sub-squares.
		That is, there is no trilateration ordering that starts with the
		points in $h_{11}$ and extends to the points in the
		sub-squares $\{h_{13}, h_{23}, h_{31}, h_{32}, h_{33}\}$, because
		none of these sub-squares neighbor a subset of sub-squares, in the
		corresponding trilateration ordering, that contain at least 3
		points combined.  Thus, empty sub-squares must be densely
		surrounded to ensure a trilateration ordering on the points
		exists.

	\item Now, consider the right grid of Figure \ref{fig::lemma-empty}, with
		a non-corner $3$-clique, and a densely surrounded empty
		sub-square.  This example shows the worst-case example of the
		condition in Lemma \ref{lemma::empty}.  The shaded sub-square
		$h_{22}$ is empty and densely surrounded, and the 3-clique is
		along the edge of the area $\mathcal{H}$. We prove that a
		trilateration ordering exists on the points in this sample grid
		with a densely surrounded sub-square.  This proves Lemma
		\ref{lemma::empty} holds in the worst-case; the proof that Lemma
		\ref{lemma::empty} holds in every case is a generalized extension
		of this.

		Define the permutation on the points in Figure
		\ref{fig::lemma-empty} via the ordering on their sub-squares:
		\begin{equation}
			\Pi := \{h_{12}, h_{11}, h_{13}, h_{21}, h_{23}, h_{32},
				h_{31}, h_{33}\}.
			\label{eq::empty-order}
		\end{equation}
		This permutation is a trilateration ordering on the points in the
		sample grid. Note that $\Pi$ is a trilateration ordering only
		because the sub-squares $\{h_{21}, h_{23}\}$ contain at least
		three points combined; that is, a permutation containing the
		points in the sub-squares $\{h_{32}, h_{31}, h_{33}\}$ can only be
		a trilateration if $\{h_{21}, h_{23}\}$ together contain at least
		three points.

		By Lemmas \ref{lemma::sequential} and \ref{lemma::nonempty}, if
		there are no other empty sub-squares in $\mathcal{H}$, then there
		is trilateration ordering on all the points in $\mathcal{H}$.
		However, if there \emph{are} other densely surrounded empty
		sub-squares in $\mathcal{H}$, then by a similar construction as
		\eqref{eq::empty-order}, there is still a trilateration ordering
		on all points in $\mathcal{H}$.

	\end{enumerate}

	Therefore, if the condition of Lemma \ref{lemma::empty} holds, the
	associated graph contains a spanning trilateration graph, and hence
	is uniquely localizable in dimension $2$.}
\end{proof}

We now use the fact that a sensor network containing a spanning
trilateration is uniquely localizable \cite{zhu} to establish a lower
bound on the probability that the unit disk sensor network with radius
$r \geq2\ell\sqrt{2}$ is localizable. Define the two events:
\begin{align*}
C &:= \{\text{There are only $3$-cliques in corner sub-squares} \}, \\
\widehat{C} &:= \{\text{There is a $3$-clique in a non-corner
	sub-square} \}.
\end{align*}
Then, the probability that a graph with such randomly distributed
points is uniquely localizable will be
\begin{align*}
\Pr{\text{uniquely localizable}} &= \Pr{\text{uniquely localizable} |
	\widehat{C}} \Pr{\widehat{C}}
 + \Pr{\text{uniquely localizable} |  C}\Pr{C} \\
 &\geq \Pr{\text{uniquely localizable} | \widehat{C}}
 \Pr{\widehat{C}}.
\end{align*}

Given that the total number of sub-squares is $M=b^2$ (for some
integer ${b \ge 3}$), we introduce a parameter $\alpha
:=\sqrt{\frac{n}{M}}$ (or $\alpha :=\sqrt[d]{\frac{n}{M}}$ for general
$d$) such that $\ell = \alpha/\sqrt{n}$ is the edge-length of each
sub-square and we can use the same connectivity radius lower bound as
before, now in terms of $\alpha$, ${r(\alpha) \ge (2 \alpha \sqrt{2})
/\sqrt{n}}$. The distribution of point number in each sub-square is
binomial $B\left( n, \frac{1}{M} \right)$, and there are a total of
$(M-4)$ non-corner sub-squares in $\mathcal{H}$. Thus, the probability
that there is a $3$-clique in a non-corner sub-square is
\[
\Pr{\widehat{C}} = 1 - \left(\sum_{i=0}^2 {n \choose i} \left(
	\frac{1}{M}\right)^i \left(1-\frac{1}{M} \right)^{n-i} \right)^{M-4}.
\]
Let $k$ be the number of empty sub-squares. By Lemma \ref{lemma::nonempty},
${\Pr{\text{uniquely localizable} |k = 0, \widehat{C}} = 1}$, and if
$p_0=(1-\frac{1}{M})^n$ is the probability that one specific
sub-square is empty, we have \[{\Pr{ k = i} = {M \choose i} p_0^i
\left(1-p_0 \right)^{M-i}}.\] Moreover, for any $i<M-4$, we have
\[
\Pr{\widehat{C}|k=i} \geq 1 - \left(\sum_{j=0}^2 {n \choose j}
   \left( \frac{1}{M}\right)^j \left(1-\frac{1}{M} \right)^{n-j} \right)^{M-4-i}
     := p_{\widehat{C},i}.
\]
From Lemma \ref{lemma::empty}, we know
\begin{align*}
	\Pr{\text{uniquely localizable} |k = i, \widehat{C}}
	\geq \Pr{\text{empty sub-squares are densely surrounded} | k = i,
		\widehat{C}}.
\end{align*}
The conditions of Lemma \ref{lemma::empty} require that empty
sub-squares do not have empty simple neighbors; thus, we first find the
probability that a sub-square does not have empty simple neighbors.
Assume there are $k$ empty sub-squares, say $s_1, s_2, \ldots, s_k$.
Because of the independence assumption, these empty sub-squares are
uniformly distributed.

Given the empty sub-square $s_1$, the probability that $s_2$ is not a
simple neighbor of $s_1$ is at least $\left( 1 - \frac{4}{M-1}
\right)$; the probability that $s_3$ is not a simple neighbor of $s_1$
or $s_2$ is at least $(1 - 2\cdot\frac{4}{M-2})$; and so on, so that
the probability that no two empty sub-squares are neighbors is at
least $\prod_{j=1}^{k-1} (1 - \frac{4j}{M-j})$.  Moreover, the
probability that an empty sub-square is densely surrounded, i.e., that
all simple neighbors of an empty sub-square have at least two points
and at least one of them has more than two points, is:
\begin{align*}
\hat{p} &= \Pr{ \text{All simple neighbors have at least two points} }
   -\Pr{ \text{All simple neighbors have exactly two points} } \\
	 &= {\textstyle \left[1 - \sum_{j=0}^1 { n \choose j } \left(
	 	\frac{1}{M}\right)^j \left(1-\frac{1}{M} \right)^{n-j}\right]^4 }
   {\textstyle - \left[{n \choose 2} \left( \frac{1}{M}\right)^2
   \left(1-\frac{1}{M} \right)^{n-2}\right]^4}.
\end{align*}
Thus, the probability that all empty sub-squares are densely
surrounded is
\begin{align*}
\Pr{\text{empty sub-squares are densely surrounded} | \; k = i,
	\widehat{C}} \geq  \hat{p}^i \cdot \prod_{j=1}^{i-1}  (1 -
	\frac{4j}{M-j}).
\end{align*}

Note that the right hand side of the above equation is positive if $i
< M/5$.  Thus, we only consider grids with less than $u := \lfloor
M/5\rfloor - 1$ empty squares.  Finally, we have the lower bound given
by the following expression:
{\small
\begin{align}
   & {\textstyle \Pr{\text{uniquely localizable}}} \notag\\
   & {\textstyle \geq \Pr{\text{uniquely localizable}| \widehat{C}}\Pr{\widehat{C}}}\notag\\
   &{\textstyle = \sum_{i=0}^u \P\{\text{uniquely localizable}
       | k  = i,\widehat{C}\} \P\{\widehat{C}|k=i\} \Pr{k = i} } \notag \\
   &{\textstyle \geq \sum_{i=0}^u \P\{\text{uniquely localizable}
       | k  = i,\widehat{C}\}\Pr{k = i} p_{\widehat{C},i} } \notag \\
   & {\textstyle \geq  p_{\widehat{C},0} \Pr{k = 0}  +
       \sum_{i=1}^u p_{\widehat{C},i}\Pr{k = i}  }
   {\textstyle  \times \Pr{\text{empty sub-squares are
       densely surrounded} | k = i,\widehat{C}} } \notag\\
   & {\textstyle \geq  p_{\widehat{C},0} \Pr{k = 0}  +
       \sum_{i=1}^u  \hat{p}^i \cdot
               p_{\widehat{C},i}\Pr{k = i}  }
    {\textstyle  \times \prod_{j=1}^{i-1}  (1 -
       \frac{4j}{M-j}) }. \label{eq::lowerbd}
\end{align}}

\begin{figure}[t]
   \centering
 \subfloat[$\alpha$ vs.\ number of nodes]
   {\includegraphics[scale=.6]{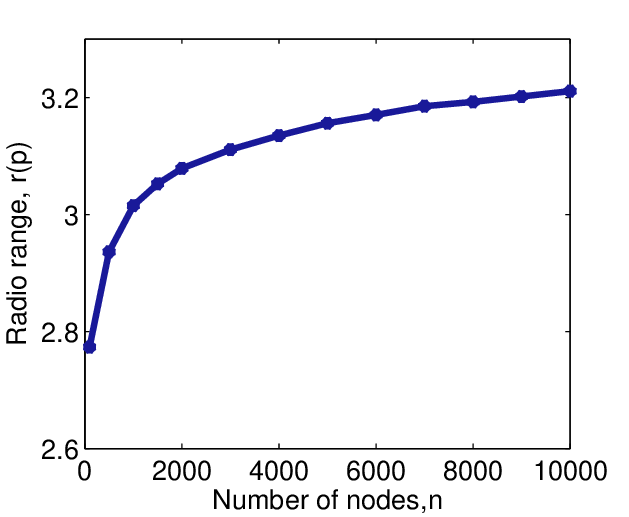}
   \label{fig::alpha}}
\subfloat[$r$ vs.\ number of nodes, compared to Angluin et al.\ bound]
   {\includegraphics[scale=.6]{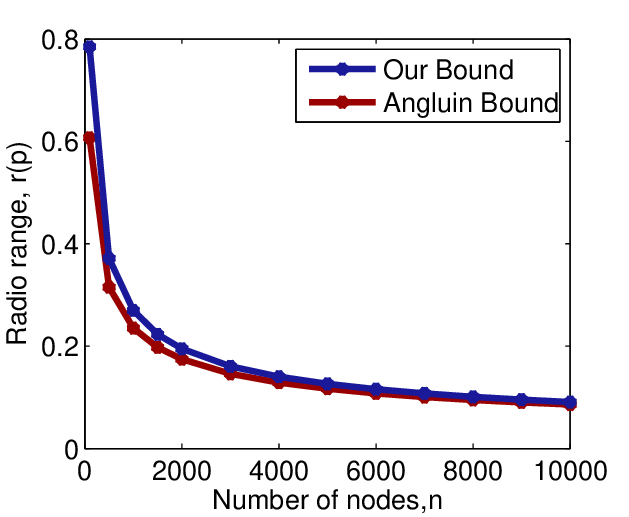}%
   \label{fig::rho}}
	 \begin{center}
   \caption{Bound on the Connectivity Radius}
   \label{fig::range-bound}
   \end{center}
\end{figure}

For different values of $n$ (to be taken as the total number of
sensor points), we can find values of $M$, and thus $\alpha$ (where
$\alpha^2$ can be viewed as the average number of sensor points in
each sub-square), such that the right hand side of Equation
\eqref{eq::lowerbd} is at least $0.99$. Figures \ref{fig::alpha} and
\ref{fig::rho} show $\alpha$ and $r$ versus the number of points $n$
such that the right hand side of Equation \ref{eq::lowerbd} is at
least $0.99$.

We also compare our connectivity bound against the bound of Angulin et
al.\ \cite{angluin-stably} in Figure \ref{fig::range-bound}.  One can
see that our bound and Angluin's are almost identical for any value of
$n$. Thus, our result shows that the bound of Angluin et al. in (of $r
> \frac{2\sqrt{2}\sqrt{\log{n}}}{\sqrt{n}}$ for $d=2$) is true even
when $n$ is small, although it was initially proved to be an
asymptotic bound when $n$ is sufficiently large.  Note that our bound,
while not in an analytical form, is proved for any value of $n$.

We recently learned of another asymptotic bound that was independently
developed by Javanmard and Montanari \cite{javanmard-bound}. However,
this bound is much weaker than ours and Angluin's.

Our connectivity result was proved for $\mathcal{H} =[0,1]^2$, i.e.,
the unit square in dimension $2$.  The result can be extended to
dimension $d > 2$. In summary, we have the following Theorem
\ref{thm::bound}.

\begin{theorem}
Let $\mathcal{H}\in [0,1]^d$ be the unit hypercube in dimension $d$
and be partitioned into a grid of $M=b^d$ equal sub-hypercubes, say
$h_1, h_2, \ldots, h_M \subset \mathcal{H}$, where $\ell=1/b$ is the
edge length of each sub-hypercube.  Let the number of sensor points in
each sub-hypercube be independently and binomially generated according
to ${B\left(n,\frac{1}{M} \right)}$ where $n=d\cdot M+1$, and let one
of the sub-hypercubes contain $d+1$ anchors. Then, if the connectivity
radius satisfies $r \geq 2\ell\sqrt{d}$, the probability that the
sensor network is uniquely localizable is given by expression
\eqref{eq::lowerbd}.
\label{thm::bound}
\end{theorem}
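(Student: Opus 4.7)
The plan is to mirror the $d=2$ argument in three steps---generalize Lemmas \ref{3-inarow}, \ref{lemma:1}, and \ref{lemma:empty} to dimension $d$, and then assemble the probability bound \eqref{eq:lowerb} with its constants adjusted to the $d$-dimensional counts. The unifying geometric fact is that with $\ell = 1/b$ and $r \geq 2\ell\sqrt{d}$, the diameter of any $2\times 2\times\cdots\times 2$ block of sub-hypercubes is exactly $2\ell\sqrt{d}$, so every point in a sub-hypercube is within range of every point in each of its (at most $3^d-1$) neighbors sharing at least a vertex. Consequently, once a $(d+1)$-clique of already-ordered points sits in a $3 \times 3 \times \cdots \times 3$ block, any other point in that block can be appended to the $(d+1)$-lateration ordering by drawing $d+1$ predecessors from the clique.

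First I would extend Lemma \ref{3-inarow} as follows: if three consecutive sub-hypercubes along one axis (and all other sub-hypercubes in the corresponding ``$3$-slab'', each assumed non-empty) are fully ordered, then the ordering extends across the bordering face into the adjacent parallel slab, and inductively through the whole grid. Moving one cell at a time along any axis places the new cell adjacent to a $3^{d-1}$-sized face of already-ordered non-empty cells, which is much more than the $d+1$ predecessors required. Second, I would extend Lemma \ref{lemma:1}: a $(d+1)$-clique in a non-corner sub-hypercube fills the surrounding $3^d$-block of ordered points in one step, seeding the slab induction; a corner $(d+1)$-clique requires, as in the 2D case, at least one extra point in a neighboring sub-hypercube in order to reach a non-corner $3 \times 3 \times \cdots \times 3$ window from which the slab induction can begin.

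Third, I would generalize Lemma \ref{lemma:empty} by redefining ``densely surrounded'' for dimension $d$: an empty sub-hypercube is densely surrounded if each of its $2d$ face-sharing (simple) neighbors contains at least two points, and at least one of them contains at least $d+1$ points. This lets the slab induction route around an empty cell by always producing $d+1$ already-ordered predecessors among its non-empty neighbors. With these three generalized lemmas, the probability derivation of Section \ref{bound} goes through with purely combinatorial substitutions: the number of non-corner sub-hypercubes becomes $M-2^d$; the clique-absence tail becomes $\sum_{i=0}^{d} \binom{n}{i}(1/M)^i(1-1/M)^{n-i}$; the exponent $4$ in $\hat p$ and in the pairwise non-adjacency product becomes $2d$; and the cutoff $u$ is replaced by $\lfloor M/(2d+1)\rfloor - 1$ to keep $\prod_{j=1}^{i-1}(1 - 2dj/(M-j))$ positive. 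Plugging these $d$-dimensional constants into the derivation reproduces the lower bound \eqref{eq:lowerb}.

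The main obstacle I anticipate is the ``densely surrounded'' step. In higher dimensions there are many more ways for empty sub-hypercubes to cluster without being face-neighbors (for instance along diagonals), and the lateration around such a cluster must supply $d+1$ simultaneous predecessors at every step; pinning down a density condition that is strong enough to guarantee this while remaining weak enough to hold with useful probability is the delicate point. A secondary technical issue is verifying that the slab induction of the generalized Lemma \ref{3-inarow} remains valid when several densely-surrounded empty cells appear in the same slab, since the induction has to detour around each one locally before advancing to the next slab.
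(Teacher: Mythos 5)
Your proposal follows essentially the same route as the paper: the paper proves the $d=2$ case via Lemmas \ref{3-inarow}, \ref{lemma:1}, and \ref{lemma:empty} and the probability chain leading to \eqref{eq:lowerb}, and then merely asserts that the result extends to $d>2$; your dimension-$d$ generalizations of the three lemmas and your constant substitutions ($M-2^d$ non-corner cells, tail sum to $d$, exponent $2d$, cutoff $\lfloor M/(2d+1)\rfloor-1$) are exactly the intended extension. The delicate points you flag (the ``densely surrounded'' condition and the boundary cells of a slab) are present at the same level of informality in the paper's own $d=2$ argument, so your proof is correct to the same standard as the original.
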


Again, the parameter $n$ of the binomial distribution can be viewed as
the total number of sensor points in the region. We can also extend
our result to another region $\mathcal{H}$ in dimension $d$ into a
grid of $M$ equal sub-hypercubes in dimension $d$, say $h_1, h_2,
\ldots, h_M \subset \mathcal{H}$, where each sub-hypercube $h_i$ will
have a volume of $1/M$, and the length of each of its edges will be
$\ell := 1/\sqrt[d]{M}$.  For example, we can assume $M = b_1 \cdot
b_2 \cdots b_d$, where $b_i \geq 3$ for $i = 1, \ldots, d$ are
positive integers.

\section{Unique Localization of Triangulation Graph}
\label{section::triangulation}

The basic SDP localization model \eqref{eq::sdp-form} is an SDP
feasibility problem. When the network is not uniquely localizable,
the max-rank of SDP feasible solutions is strictly greater than $d$.
In practice, one may still be interested in finding a feasible SDP
solution with rank $d$, representing one possible localization of
points in $\R^d$.  In this section, we show that adding an objective
function that maximizes the sum of certain distances in a
triangulation graph (in $\R^2$) will produce a rank-$2$ SDP solution.
The result should be applicable to $d>2$.

\begin{definition}
	Consider a set of points $\mathcal{P} = \{p_1, p_2 \dots p_n \} \in
	\R^2$. A \emph{triangulation}, $\mathcal{T_P}$, of the points in
	$\mathcal{P}$ is a subdivision of the convex hull of $\mathcal{P}$
	into simplices (triangles) $\{p_i, p_j, p_k\}$, for some $i,j,k \in
	\{1, \dots, n\}$, such that the edges of two simplices do not
	intersect or share a common face.
\end{definition}

\begin{definition}
	For a triangulation $\mathcal{T_P}$, we define a \emph{triangulation
	graph} $G_{\mathcal{T_P}}(V,E)$ such that $V = \mathcal{P}$ and
	${(p_i, p_j) \in E}$ if and only if $(p_i, p_j)$ is an edge of a
	simplex in $\mathcal{T_P}$. Note that triangles in a triangulation graph do
	not overlap, and triangles do not exist strictly inside other triangles.
\end{definition}

Triangulation graphs and their properties have been studied in the
literature \cite{BGJ05, AR05, LCWW03}. Bruck et al. \cite{BGJ05}
showed that embedding a unit disk graph with local angle
information (angles between points) is NP-hard, while the same problem
on a triangulation graph is not.  Ara\'{u}jo and Rodrigues \cite{AR05}
introduced an algorithm to construct a triangulation graph from a unit
disk graph with $\bigo{n\log n}$ bit communications between points.

\begin{figure}[t]
	\begin{center}
\includegraphics[scale=.65]{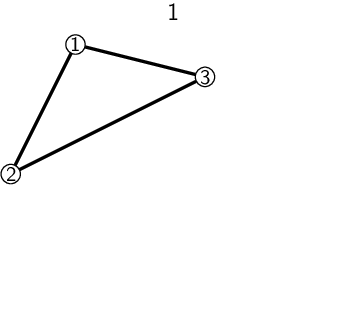}%
\includegraphics[scale=.65]{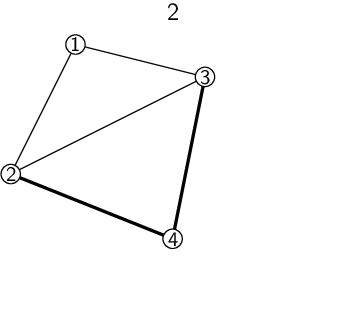}%
\includegraphics[scale=.65]{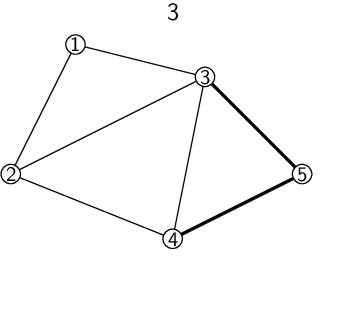}%
\includegraphics[scale=.65]{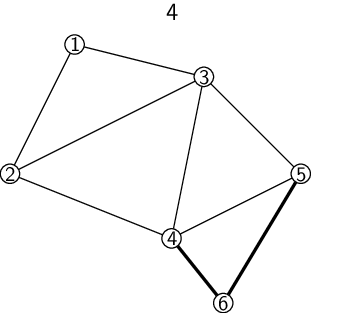}

\includegraphics[scale=.65]{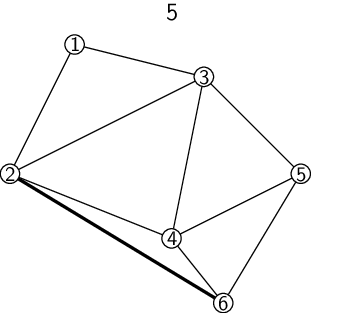}%
\includegraphics[scale=.65]{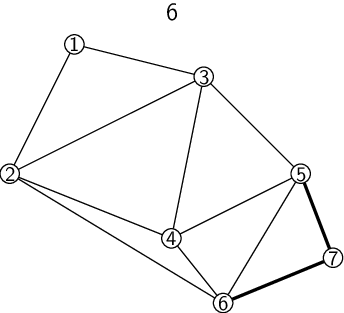}%
\includegraphics[scale=.65]{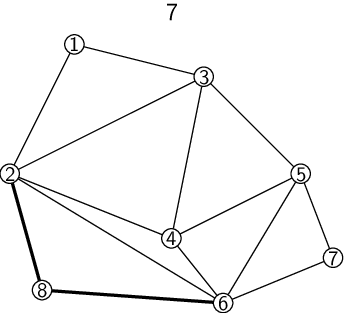}%
\includegraphics[scale=.65]{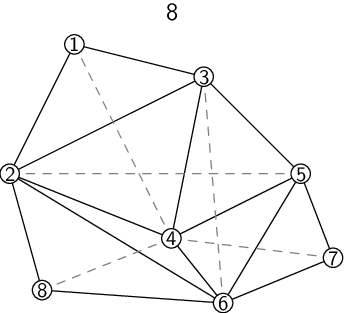}%
	\end{center}
	\caption{Construction of triangulation graph}
	\label{fig::tri-examples}
\end{figure}

We formally decompose a triangulation $\mathcal{T_P}$ into an initial
clique $K_3$ and a set of actions ${\mathcal{A} = \{ a_1,a_2 \dots
a_m\}}$, where an action $a_i$ consists of adding a point and
connecting it to either two adjacent points or two connected external
points, where a point is called \emph{external} if it is not strictly
inside the convex hull of a cycle in the graph.  This leads us to the
following lemma, whose proof is omitted.

\begin{lemma}
	A triangulation can be constructed recursively by either adding an
	external point that connects to two adjacent points of a simplex
	(triangle) already in $\mathcal{T_P}$ such that the new edges do not
	cross any existing edges (see Figure \ref{fig::tri-examples}, 1-4), or
	simply connecting two external points already in $\mathcal{T_P}$ to
	form a triangle (see Figure \ref{fig::tri-examples}, 5).
\end{lemma}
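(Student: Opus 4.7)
The plan is to prove the lemma by induction on the number of simplices in $\mathcal{T_P}$ — equivalently, by exhibiting a constructive procedure that starts from a single simplex of $\mathcal{T_P}$ and grows it out to the full triangulation via the two allowed actions. The base case is a triangulation consisting of one simplex, which is exactly the initial clique $K_3$ and requires no actions.

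For the inductive step, I would pick any simplex $T_0 \in \mathcal{T_P}$ as the initial $K_3$ and maintain a current sub-triangulation $R \subseteq \mathcal{T_P}$, starting with $R = \{T_0\}$. At each stage, I would find a simplex $T \in \mathcal{T_P} \setminus R$ that is adjacent to $R$ (shares at least one edge with some simplex of $R$) and add it via one of the two actions. If $T$ shares exactly one edge with $R$ and its third vertex is not in $R$, this is an action of type $1$: the new vertex is an external point connected to the two adjacent vertices of a simplex already in $R$. If instead $T$ shares exactly two edges with $R$, then all three vertices of $T$ lie in $R$ and only its third edge is new, and this is an action of type $2$: connecting two external points of $R$ via the new edge to form $T$. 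The existence of an adjacent $T$ at each stage follows from the connectivity of the dual graph of $\mathcal{T_P}$ (vertices are simplices, edges join simplices sharing a common edge), which is connected because $\mathcal{T_P}$ triangulates a connected region.

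The main obstacle — and the crucial invariant I must maintain — is that $R$ remains a topological disk (simply connected with no holes) throughout the growing process. This is needed to rule out two degenerate cases: that some $T$ might share three edges with $R$, which would force $R$ to have a triangular hole; and that some $T$ might share one edge with $R$ but have all three vertices already in $R$, a configuration fitting neither action. The second case cannot arise because the third vertex of such a $T$ would have to lie either in the interior of $R$'s region — impossible, since every interior vertex of $R$ has its full simplicial fan of $\mathcal{T_P}$-triangles included in $R$, so every edge incident to it already belongs to $R$ — or on $R$'s boundary in a position that forces a self-touching, non-disk configuration. Starting from $R = \{T_0\}$, the region is a disk, and each action preserves this: type $1$ attaches a new triangle along a single boundary edge, while type $2$ fills a V-shaped boundary notch along two consecutive boundary edges, both contractible operations that neither create holes nor change topology. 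A minor auxiliary check is that every edge added by either action does not cross any previously present edge, which is immediate because every such edge is already an edge of the final planar triangulation $\mathcal{T_P}$ by construction.
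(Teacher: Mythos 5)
Your route is genuinely different from the paper's: the paper argues by induction on an \emph{external point} (peeling the last-added point or edge off the full triangulation and applying the inductive hypothesis to the smaller point set), whereas you induct on the number of simplices, growing a sub-triangulation $R$ by a shelling-type procedure. That is a legitimate strategy in principle, but as written it has a genuine gap. The claim that a triangle $T$ sharing exactly one edge with $R$ while having all three vertices in $R$ ``forces a self-touching, non-disk configuration'' of $R$ is false. Let $R$ be a U-shaped corridor of triangles inside $\mathcal{T_P}$ (still a topological disk, since a U-shape is simply connected), and let $T$ be the triangle of $\mathcal{T_P}$ bridging the two tips of the U: it shares one boundary edge with one arm of $R$ and its third vertex lies on the other arm. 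This $T$ is adjacent to $R$ in the dual graph, $R$ is a disk at that moment, yet adding $T$ fits neither action type (it requires two new edges between existing points) and would create a hole. So it is not the disk property of $R$ that excludes the bad case; it is the \emph{choice} of which adjacent triangle to add next, and your procedure (``find a simplex adjacent to $R$'') gives no rule guaranteeing a good choice exists and is made.

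What you are implicitly invoking is the theorem that every triangulation of a $2$-ball is shellable --- true, but it needs an argument, precisely because not every dual-connected ordering is a shelling. To close the gap you must either (i) prove that at every stage \emph{some} adjacent triangle meets $R$ in exactly one edge with a new third vertex, or in exactly two consecutive boundary edges, and specify that the procedure selects such a triangle (e.g.\ by an ear/boundary-peeling argument on the complementary region $\mathcal{T_P}\setminus R$), or (ii) switch to the paper's vertex-peeling induction, where one exhibits an external point of the convex hull whose removal (together with a re-triangulation of its link, realized by reversing type-$2$ actions) leaves a smaller triangulation. The remaining bookkeeping in your write-up (new edges cannot cross existing ones because they are edges of the planar $\mathcal{T_P}$; a three-edge overlap forces a hole) is fine once the selection rule is in place.
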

\begin{proof}
   By induction on an external point; see Figure \ref{fig::tri-examples}.
\end{proof}

Step 8 of Figure \ref{fig::tri-examples} shows the set of
\emph{virtual edges} in the sample triangulation graph. These virtual
edges will be used to construct an appropriate objective function of
the SDP relaxation for triangulation graphs.

\begin{definition}
	In a triangulation graph, \emph{adjacent triangles} are two
	triangles which share a common edge.  A \emph{virtual edge} exists
	between two points $i$ and $j$ when $i$ and $j$ belong to adjacent
	triangles, but $(i,j) \not\in E$.  The set of virtual edges between
	sensors is denoted $E_v$, and between sensors and anchors is denoted
	$\bar{E}_v$.
\end{definition}

Consider adding an objective function to the SDP model \eqref{eq::sdp-form}
that maximizes the sum of the lengths of all virtual edges in a generic
triangulation graph. The primal SDP relaxation, for $d=2$, becomes:
\begin{align}
   \maximize \quad & \sum_{(k,j) \in \bar{E}_v} \bar{A}_{kj} \bullet Z
        +  \sum_{(i,j) \in E_v} A_{ij} \bullet Z \notag \\
   \st \quad & Z_{(1:d,1:d)} = I_d \notag \\
       & A_{ij} \bullet Z = d_{ij}^2, \forall (i,j) \in E
			 \label{eq::max-primal}  \\
			 & \bar{A}_{kj} \bullet Z = \bar{d}_{kj}^2, \forall (k,j) \in
			 	\bar{E} \notag \\
       & Z \succeq 0 \notag
\end{align}

and the dual of \eqref{eq::max-primal} is:
\begin{align}
   \minimize \quad & I_d \bullet V + \sum_{(i,j) \in E} y_{ij}d^2_{ij}
       + \sum_{(k,j) \in \bar{E}} w_{kj} \bar{d}^2_{kj} \notag\\
   \st \quad & U = \pmat{ V & 0 \\ 0 & 0}
           + \sum_{(i,j) \in E}y_{ij} A_{ij}
           + \sum_{(k,j) \in \bar{E}} w_{kj} \bar{A}_{kj} \notag \\
       & \quad - \sum_{(k,j) \in \bar{E}_v} \bar{A}_{kj}
           - \sum_{(i,j) \in E_v} A_{ij} \label{eq::max-dual}\\
       & U \succeq 0. \notag
\end{align}


For a triangulation graph with at least three anchors, we can show that
\eqref{eq::max-dual} is strictly feasible, i.e., there exists a feasible $U$
with $U\succeq 0$ (see Proposition 4.1. in \cite{manchoso06}). The primal SDP
\eqref{eq::max-primal} also has a feasible point. As a result, the strong
duality and complementarity condition hold for \eqref{eq::max-primal} and
\eqref{eq::max-dual}.

We derive the following exact-localization theorem.
\begin{theorem}
Consider applying the SDP relaxation \eqref{eq::max-primal} to a generic
triangulation graph with at least three anchors. Then, the rank of an optimal
dual slack matrix of
\eqref{eq::max-dual} is $n$ and the rank of the optimal SDP solution of
\eqref{eq::max-primal} is $d=2$, so that the pair is strictly complementary and
the SDP relaxation produces the correct localization.
\label{thm::obj-fn}
\end{theorem}

\begin{proof}{
We use induction to show that the ranks of the optimal dual slack matrix $U$
and primal SDP solution $Z$ are $n$ and $d=2$, respectively. This implies that
the strict complementarity conditions holds and \eqref{eq::max-primal} produces
the correct localization, that is, the original true positions of the sensor
points of the generic triangulation graph.

Assume the result is true for any triangulation graph with $n$ points.  It
remains to be shown that this also holds for graphs with $n+1$ points.  It is
clearly true for a single simplex when $n = 3$.

Let $X^n \in \R^{d \times n}$ be the correct locations of points,
where the superindex $n$ represents the number of points.  By the
induction assumption, the solution to \eqref{eq::max-primal} is ${Z^n :=
\pmat{ I_d & X^n \\ \left(X^n\right)^T & \left(X^n\right)^TX^n}}$.
Moreover, the optimal dual slack matrix $U^n$ satisfies $U^n \bullet
Z^n = 0$ and has rank $n$; we can write the optimal dual slack matrix
in terms of its submatrices $U^n= \pmat{ U^n_{11} & U^n_{12} \\
U^n_{21} & U^n_{22} }$, where $U^n_{11} \in \R^{d\times d}$ and
$U^n_{22} \in \R^{n \times n}$. Note that $U^n_{22} \succ 0$, which
follows from the fact that rank$(U^n_{22}) = n$ and $U^n \succeq 0$.

The complementarity condition $U^n \bullet Z^n = 0$ means the elements of $U^n$
represent a stress on each edge such that the total force at all non-anchor
points is zero (assuming, without loss of generality, a stress of $-1$ on all
virtual edges).

\begin{definition}
	Given a set of sensor locations $X = [x_1, x_2, \dots, x_n] \in
	\R^{d\times n}$, let $G(V,E \cup \bar{E})$ be the corresponding
	graph.  A matrix $U \in \R^{n\times n}$ is a \emph{stress matrix} of
	the sensor network if it satisfies the constraints of
	\eqref{eq::max-dual} and $U \bullet (X^TX) = 0$.  That is, each
	element of $U$ represents a stress on the associated edge in $E \cup
	\bar{E}$ such that the total force on each non-anchor point is zero.
\end{definition}

We decompose the triangulation graph into an initial simplex $K_3$,
and actions $\mathcal{A} = \{ a_1,a_2 \dots a_m\}$.  Without loss of
generality, we assume the points in the first triangle are anchor
points and let the last points added to the graph be $x_{n+1}$.  For
example, consider Figure \ref{fig::tri-examples}; let $U^7$ be the dual
slack matrix on points 1--7 and assume the subgraph induced on the
first 7 points is uniquely localizable.  When point 8 is added along
with its incident edges, points $(2,4,6,8)$ form a clique (when
including the virtual edge between 4 and 8, which is unique when its
length is maximized). Consider an SDP relaxation problem in dimension 4 that
maximizes the length of the virtual edge between 4 and 8; this problem will
have a unique optimal solution with rank 2 that determines the exact location
of points $(2,4,6,8)$, and an optimal dual slack matrix that forms a stress
matrix for these four points.


Now consider the general case, where $x_{n+1}$ is the last point added
to the graph. A new triangle is created by adding $x_{n+1}$, its
adjacent triangle and the virtual edge, which forms a 4-clique.
Let $\Omega_0$ be the corresponding positive-semidefinite stress matrix on the
graph formed by $x_{n+1}$, the two points adjacent to $x_{n+1}$ (say, $g$ and
$h$) and the point with which $x_{n+1}$ has a virtual edge (say, $k$).  We
examine the case where $g$ and $h$ are sensors, however the case where at least
one of them is an anchor is an easy extension. As before, the locations $x_g,
x_h, x_k$, and $x_{n+1}$ can be uniquely determined by solving an SDP
relaxation, and $\Omega_0$ is the optimal dual slack matrix that solves its
dual problem:
{
\begin{align}
\minimize \quad & y_{gk}d_{gk}^2 +y_{hk}d_{hk}^2+y_{gh}d_{gh}^2+y_{g,n+1}d_{g,n+1}^2+y_{h,n+1}d_{h,n+1}^2 \label{eq::K4max-dual} \\
   \st \quad & U^4 \succeq 0, \notag
\end{align}}
where
{\small
\begin{align*}
U^4=
   \pmat{ - 1 + y_{gk} + y_{hk} & - y_{gk} & -y_{hk} & 1 \\
   -y_{gk} & y_{gk} + y_{gh} + y_{g,n+1} & - y_{gh} & - y_{g,n+1} \\
   -y_{hk} & - y_{gh} & y_{hk} + y_{gh} + y_{h,n+1} & -y_{h,n+1} \\
   1 & -y_{g,n+1} & -y_{h,n+1} & -1 + y_{g,n+1} + y_{h,n+1} }.
\end{align*}}

Assume $(y_{gk}, y_{hk}, y_{gh}, y_{g,n+1}, y_{h,n+1})$ is the optimal solution
of this SDP, then 
\[
\Omega_0 = U^4.
\]
It's easy to see that strong duality and complementarity condition hold for
this SDP, and therefore $$\sum_{i,j \in \{g, h, k, n+1\}} [\Omega_0 ]_{ij} (x_i^T
x_j)=0.$$

Note that ${0 < (-1 +y_{g,n+1} +y_{h,n+1})}$ because $\Omega_0
\succeq 0$,  and consider the updated stress matrix
\[
U_{22}^{n+1} := \pmat{ U_{22}^n & 0_{n\times 1} \\ 0_{1\times n} & 0}
+ \Omega,
\]
where $\Omega \in \R^{(n + 1)\times (n + 1)}$ is the stress matrix of
the new edges, that is, $\Omega_{([g, h, k, n+1],[g, h, k, n+1])} =
\Omega_0$.

The new matrix $U^{n+1}$ will be feasible for the dual, since $\Omega_0$ is the
solution of \eqref{eq::K4max-dual}, and $\Omega \succeq 0$, $U^n \succeq 0$
implies that $U^{n+1} \succeq 0$.

Define
{\small
\[
Z^{n+1} := \pmat{ Z^n & \pmat{x_{n+1} \\ (X^n)^T x_{n+1}} \\
\pmat{ x_{n+1}^T & x_{n+1}^T X_n} & x_{n+1}^T x_{n+1} }
\]}
as the correct locations of the updated points.  (Note that given this
definition of $Z^{n+1}$, it does not immediately follow that
rank$(Z^{n+1}) = d$, since the added last row of $Z^{n+1}$ can be
linearly independent from the first $n$ rows.) The sum of element-wise
products of $U^{n+1}$ and $Z^{n+1}$ is
\begin{align*}
 &U^{n+1} \bullet Z^{n+1} =
   U^{n} \bullet Z^{n} + \sum_{(i,j)} [\Omega_0 ]_{ij} (x_i^T x_j) = 0.
\end{align*}

Moreover, we can show that $U^{n+1}_{22} \succ 0$.  Assume this is not
true, i.e., assume that there is a vector $z \in \R^{n+1}$ such that
\[
   z\T U_{22}^{n+1} z =
       z^T \bmat{U^n_{22} & 0 \\ 0 & 0} z + z^T \Omega z = 0,
\]
which holds if and only if $z^T  \bmat{U^n_{22} & 0 \\ 0 & 0}z = 0$
and $z^T \Omega z = 0$.  Since $U_{22}^n \succ 0$, this means that the
first $n$ elements of $z$ are zero, i.e., $z_{(1:n)} = 0$.  Thus,
\[
z^T \Omega z = z_{n+1}^2 \Omega_{n+1} = z_{n+1}^2 (-1 +y_{g,n+1} +y_{h,n+1}) = 0
\]
which implies $z_{n+1} = 0$. 
Thus, $z\T U_{22}^{n+1} z = 0$ if and only if $z = 0$, implying
$U_{22}^{n+1} \succ 0$ and rank$(U^{n+1}) = n+1$.  Therefore, the rank
of $(Z^{n+1})$ is $d$,  and consequently from \cite{manchoso}, $Z^{n+1}$
is the unique solution to \eqref{eq::max-primal}, so that the
localization is correct and exact.}
\end{proof}

Theorem \ref{thm::obj-fn} implies that the strict complementarity condition
holds when localizing a generic triangulation graph with the selected objective
function.  This result is interesting because, in general, it is difficult to
prove strict complementarity for SDPs. How to compute a stress matrix (or
optimal dual matrix) and determine whether the stress matrix has rank $n$ are
also important questions in rigidity theory for graph realization. Clearly,
Theorem \ref{thm::obj-fn} is applicable to any graph that contains a generic
triangulation graph as a spanning subgraph. In practice, the objective of the
SDP relaxation may include all non-edges that are not specified in the given
graph (rather than just virtual edges), which we experiment in the next
section.


\section{Heuristic Objective Function}
\label{section::obj-fn}

Section \ref{section::triangulation} proves that adding a given
objective function to \eqref{eq::sdp-form} results in a correct
localization for a certain class of graphs, whereas the formulation
without an objective function may not.

Based on these findings, we tested a number of different SDP
relaxation methods with different objective functions. For each method
(i.e., each objective function), we ran the relaxation on a large
number of random sensor networks and determined the success rate of
each method.  The following objective functions were tested to
heuristically determine the best method.

\begin{enumerate}

	\item (ZERO) Solve the formulation \eqref{eq::sdp-form} (with no
		objective function).  This can be viewed as a control simulation
		against which to compare other methods.

	\item (MAX) Maximize the sum of all the `non-edge' lengths by solving
		the formulation:
		\begin{equation}
		   \begin{array}{lll}
		       \maximize & \sum_{(i,j) \not\in E} d_{ij} + \sum_{(k,j)
					 \not\in \bar{E}} \bar{d}_{kj} \\
		       \st & Z_{(1:d,1:d)} = I_d \\
		        & A_{ij} \bullet Z = d_{ij}^2, & \forall (i,j) \in E \\
						&\bar{A}_{kj} \bullet Z = \bar{d}_{kj}^2, & \forall (k,j) \in
							\bar{E} \\
		       & Z \succeq 0.
		   \end{array}
			 \tag{SDP-MAX}
			 \label{eq::sdp-max}
		\end{equation}

	\item (MIN) Minimize the sum of all the `non-edge' lengths by solving
		the formulation:
		\begin{equation}
		   \begin{array}{lll}
		       \minimize & \sum_{(i,j) \not\in E} d_{ij} + \sum_{(k,j)
					 \not\in \bar{E}} \bar{d}_{kj} \\
		       \st & Z_{(1:d,1:d)} = I_d \\
		        & A_{ij} \bullet Z = d_{ij}^2, & \forall (i,j) \in E \\
						&\bar{A}_{kj} \bullet Z = \bar{d}_{kj}^2, & \forall (k,j) \in
							\bar{E} \\
		       & Z \succeq 0.
		   \end{array}
			 \tag{SDP-MIN}
			 \label{eq::sdp-min}
		\end{equation}

	\item (MAX-PT) Maximize the sum of the distances from each sensor
		location $x_i \in \R^d$ to a distant point, where $x_i$ is set to
		the corresponding elements of the decision matrix $Z$.  For
		example, for $\vb 1 \in \R^d$ the vector of all ones, we took the
		point $p := 1000 \cdot \vb 1$ and solved the formulation:
		\begin{equation*}
		   \begin{array}{lll}
		       \maximize & \sum_{i=1}^n \norm{p - x_i}^2 \\
					 \st & Z_{(1:d,1:d)} = I_d \\
		        & A_{ij} \bullet Z = d_{ij}^2, & \forall (i,j) \in E \\
						&\bar{A}_{kj} \bullet Z = \bar{d}_{kj}^2, & \forall (k,j) \in
							\bar{E} \\
		       & Z \succeq 0.
		   \end{array}
		\end{equation*}

\end{enumerate}

We constructed 200 uniformly distributed sensor networks and tested
each method on the networks for a number of different radio ranges.
Each randomly distributed sensor network has 100 points in a unit
square (in dimension $d=2$), and the distance between two points is
known when they are within the given radio range.  Table
\ref{table::results} shows the percent of sensor networks that were
correctly localized using each method, for each radio range.

\begin{table}[t]
	\begin{center}
	\begin{tabular}{cccccc}
		&& \multicolumn{4}{c}{\underline{Method}} \\
		& \multicolumn{1}{c}{} & \multicolumn{1}{c}{ZERO} &
		\multicolumn{1}{c}{MAX} & \multicolumn{1}{c}{MIN} &
		\multicolumn{1}{c}{MAX-PT} \\
		\cline{3-6}
		\multirow{4}{*}{\underline{Radio Range}} &
		\multicolumn{1}{c|}{0.15} & \multicolumn{1}{|c|}{0} &
		\multicolumn{1}{|c|}{0} & \multicolumn{1}{|c|}{0} &
		\multicolumn{1}{|c|}{0}\\
		\cline{3-6}
		 & \multicolumn{1}{c|}{0.2} & \multicolumn{1}{|c|}{41} &
		 \multicolumn{1}{|c|}{75} & \multicolumn{1}{|c|}{39} &
		\multicolumn{1}{|c|}{0}\\
		\cline{3-6}
		& \multicolumn{1}{c|}{0.25} & \multicolumn{1}{|c|}{87} &
		 \multicolumn{1}{|c|}{95} & \multicolumn{1}{|c|}{88} &
		\multicolumn{1}{|c|}{0}\\
		\cline{3-6}
		& \multicolumn{1}{c|}{0.3} & \multicolumn{1}{|c|}{98} &
		 \multicolumn{1}{|c|}{100} & \multicolumn{1}{|c|}{100} &
		\multicolumn{1}{|c|}{4}\\
		\cline{3-6}
		& \multicolumn{1}{c|}{0.35} & \multicolumn{1}{|c|}{100} &
		 \multicolumn{1}{|c|}{100} & \multicolumn{1}{|c|}{100} &
		\multicolumn{1}{|c|}{7}\\
		\cline{3-6}
		& \multicolumn{1}{c|}{0.4} & \multicolumn{1}{|c|}{100} &
		 \multicolumn{1}{|c|}{100} & \multicolumn{1}{|c|}{100} &
		\multicolumn{1}{|c|}{13}\\
		\cline{3-6}
	\end{tabular}
	\end{center}
	\caption{Percent of Networks Correctly Localized}
	\label{table::results}
\end{table}

As can be seen from Table \ref{table::results}, maximizing the sum of
the unknown distances out-performs the other three methods tested, and
maximizing the sum of distances from a distant point does not produce
good results. Moreover, the methods (ZERO), (MAX) and (MIN) all seemed
to work very well when the radio range was at least 0.35. This radio
range is much smaller than the lower bound given by
\eqref{eq::lowerbd}, but has not been theoretically proved as a radio
range that will lead to a correct localization.

\nocite{*}
\bibliographystyle{plain}
\bibliography{main}

\end{document}